\newtheorem{theorem}{Theorem}[section]
\newtheorem{lemma}[theorem]{Lemma}
\theoremstyle{definition}
\newtheorem{definition}[theorem]{Definition}
\newtheorem{example}[theorem]{Example}
\theoremstyle{remark}
\numberwithin{equation}{section}
\newcommand{\R}{\mathbb{R}}  % The real numbers.
\newcommand{\Z}{\mathbb{Z}}  % The integers.
\newcommand{\N}{\mathbb{N}}  % The natural numbers.
\newcommand{\Nb}{\mathfrak{N}}  
\newcommand{\cuboid}{\mathop{cuboid}}
\newcommand{\card}{\mathop{\mathrm{card}}}
\newcommand{\sll}{\mathop{<\kern-4pt<}}
\begin{document}

\title{Local Finiteness of Infinite Neighbor Complexes}
\author{James J.~Madden}
\address{Louisiana State University, Baton Rouge}
\email{madden@math.lsu.edu, jamesjmadden@gmail.com}
\subjclass{Primary  05E40, 05E45}
\keywords{Scarf complex, Buchberger complex, graded free resolution}
\date{August 15, 2016}

\begin{abstract} 
We show that if the neighbor complex, as defined by H.~Scarf, of an infinite subset of $\Z^n$ has finite dimension, then each vertex has finitely many neighbors.   \end{abstract}

\maketitle

\section{Background and Motivation}

We use the following notation.  For $a, b\in \R^n$, $a\leq b$ means that $a$ is less than or equal to $b$ in every coordinate and $a\sll b$ means that $a$ is \emph{strictly} less than $b$ in every coordinate, i.e., $\pi_i(a)<\pi_i(b)$ for $i=1,\ldots,n$.  The coordinatewise maximum of $a$ and $b$ is written $a\vee b$, and the coordinatewise minimum is written $a\wedge b$.  If $B\subseteq \R^n$, then $\vee B$ denotes the coordinatewise supremun in $(\R\cup\{\pm\infty\})^n$, i.e., $\pi_i(\vee B) = \sup\{\,\pi_i(b) \mid b\in B\,\}$.  

\begin{definition} Let $A$ be a subset of $\R^n$.  The \textit{neighbor complex of $A$}, denoted $\Nb(A)$, is the set of all $B\subseteq A$ such that there is no $a\in A$ satisfying $a\sll \vee B$.  If $\{a, a'\}\in \Nb(A)$, we say \textit{$a$ and $a'$ are $A$-neighbors}.
\end{definition}

If $B'\subseteq B\in \Nb(A)$, then $B'\in \Nb(A)$.  Thus $\Nb(A)$ is an abstract simplicial complex.  This complex was introduced by Herbert Scarf in \cite{Sc} as a tool in integer programming.   Scarf was particularly interested in studying $\Nb(G)$, when $G$ is a discrete generic subgroup of $\R^n$ and $G\cap\R_{\geq0}^n=\{0\}$. The meaning of ``generic'' is discussed in the next paragraph.

In the present work, we shall say that $A\subseteq \R^n$ is \emph{generic} if for any $A$-neighbors $a, a'\in A$,  $\pi_i(a)\not=\pi_i(a')$ for all $i = 1, \ldots,n$. In fact, there are several variants of the notion of generic; see \cite{MM} for a discussion.  The sense of generic used by Scarf was stronger than the one we use here, but since our main theorem makes no reference to genericity, there is no reason to say any more about this here.   

B\'ar\'any, Howe, Scarf and Shallcross \cite{BHS}, \cite{BSS} determined the topology of $\Nb(G)$ when $G$ is a group of the kind studied by Scarf.  Scarf observed that if $m$ is any positive interger, then there are generic subgroups $G\subseteq \Z^4$ in which every element has more than $m$ $G$-neighbors. Bounds for the number of neighbors can be given in terms of the size of the integers required to give a basis for $A$; see \cite{Sh}.  

At present, one of the chief motivations for studying $\Nb(A)$ comes from algebra.  Let $S$ be the algebra of polynomials in $n$ variables over a field and suppose that $A$ consists of the exponent vectors of a minimal monomial generating set of a monomial ideal $I\subseteq S$.  Bayer, Peeva and Sturmfels,  \cite{BPS} showed that if $A$ is generic, then $\Nb(A)$ supports a minimal free resolution of $I$.   Recently,  Olteanu and Welker \cite{OW} introduced new combinatorial methods to study $\Nb(A)$ when $A$ comes from a monomial ideal in this way.  They showed  that even when $A$ is not generic, $\Nb(A)$ supports a free resolution, though not a minimal one. Of course in these applications, $A$ is finite.  

Bayer and Sturmfels \cite{BS} generalized the method of \cite{BPS} to find combinatorial resolutions of binomial ideals.  In their work, they use the neighbor complex of a generic subgroup $G\subseteq \Z^n$.  The complex $\Nb(G)$ admits an action by $G$.  They display a $\Z^n/G$-graded resolution of the lattice ideal $I_G:=\langle X^{\gamma^+}-X^{\gamma^-}\mid \gamma\in G\rangle\subseteq S$ supported by the quotient complex $\Nb(G)/G$.    

McGuire \cite{M} considered the case when $A$ is the union of finitely many cosets of a subgroup $G\subset\Z^n$, and he generalized the method of \cite{BS} to resolve ideals generated by binomials as well as monomials.  Here the quotient complex $\Nb(A)/G$ also comes into play.  

In algebraic applications involving infinite $A$, such as those mentioned in the previous paragraph, it is important to know that every vertex of $\Nb(A)$ has finitely many neighbors, or as we say, $\Nb(A)$ is locally finite, cf. the discussion in \cite{MS}, page 178.   The purpose of the present paper is to provide a very general criterion for the local finiteness of $\Nb(A)$.

\section{Main Theorem}\label{sec2}

We use the usual definitions of dimension for simplicial complexes, namely, if $B\in \Nb(A)$, then $\dim B:=\card B-1$.  If there is $d\in \N$ such that  $\dim B\leq d$ for all $B\in \Nb(A)$ \emph{and} $\dim B= d$ for at least one $B\in \Nb(A)$, then we say $\dim \Nb(A)= d$.   Note that $\dim \Nb(A) = d$ implies that for each $B\in \Nb(A)$, $\card \left(A\cap (\vee B+\R^n_{\leq 0})\right)\leq d+1$. 

\begin{definition}We say that a simplicial complex is \emph{locally finite} if each vertex is contained in finitely many simplices.\end{definition}

Clearly, a complex is finite if and only if each vertex is in finitely many 1-simplices.  In particular, $\Nb(A)$ is locally-finite  if and only if every $a \in A$ has finitely many $A$-neighbors.  In general, a $d$-dimensional simplicial complex need not be locally finite.   For example, consider the $1$-dimensional complex with vertex set $\N$ and edge set consisting of all $\{0,j\}$ with $0\not=j\in \N$.

\begin{theorem}
If $A\subseteq \Z^n$ and $\dim \Nb(A)=d$ for some $d\in \N$, then $\Nb(A)$ is locally finite.\end{theorem}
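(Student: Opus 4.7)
The plan is to prove the contrapositive. Assume some $a\in A$ has infinitely many $A$-neighbors; I will produce, for any prescribed $d$, a simplex of $\Nb(A)$ of cardinality $\geq d+2$, contradicting $\dim\Nb(A)=d$. Translating $A$, I may assume $a=0\in A$.

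A useful preliminary observation is the following closure property: for any $B\in\Nb(A)$, the set $B^*:=\{\,a'\in A : a'\leq \vee B\,\}$ again lies in $\Nb(A)$. Indeed, $B\subseteq B^*$ and every element of $B^*$ is $\leq\vee B$, so $\vee B^*=\vee B$, and no $a''\in A$ is $\sll\vee B^*$. Hence a contradiction follows as soon as I can name some $B\in\Nb(A)$ with $\card B^*\geq d+2$.

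Let $a_1,a_2,\ldots$ be distinct $A$-neighbors of $0$, and set $v_i:=0\vee a_i$ and $S_i:=\{\,k: \pi_k(a_i)>0\,\}$. Since there are only $2^n$ subsets of $\{1,\ldots,n\}$, pigeonhole yields an infinite subfamily on which $S_i$ equals a common $S$; reindex accordingly. If $S=\emptyset$, then every $v_i=0$, so the hypothesis $\{0,a_i\}\in\Nb(A)$ says no $b\in A$ satisfies $b\sll 0$; applying the closure property to $B=\{0,a_1\}$ yields the infinite simplex $B^*\supseteq\{0,a_1,a_2,\ldots\}\in\Nb(A)$, already a contradiction. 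If $S\ne\emptyset$, I invoke Dickson's lemma on $(\pi_S(a_i))\subseteq\Z_{>0}^{|S|}$ to pass to a further subsequence with $\pi_S(a_1)\leq\pi_S(a_2)\leq\cdots$. For $i\leq j$ the coordinates $k\notin S$ give $\pi_k(a_i)\leq 0=\pi_k(v_j)$, while the coordinates $k\in S$ give $\pi_k(a_i)=\pi_k(v_i)\leq\pi_k(v_j)$, so $a_i\leq v_j$. Taking $B=\{0,a_{d+1}\}\in\Nb(A)$, the closure property gives $B^*\supseteq\{0,a_1,\ldots,a_{d+1}\}$, a simplex of $\Nb(A)$ of cardinality $\geq d+2$.

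I expect the main obstacle to be coaxing a monotone chain of $v_i$'s out of the data: the neighbors $a_i$ need not be monotone in all coordinates, and without first splitting by sign pattern the projected sequence need not live in a nonnegative orthant to which Dickson's lemma applies. Once this reduction is in place, one must also verify that the ``negative'' coordinates $k\notin S$ cause no damage, which they do not because $v_j$ vanishes there while each $a_i$ is already nonpositive there.
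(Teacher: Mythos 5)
Your argument is correct, and it shares the two essential ingredients of the paper's proof---the closure property that $B^{*}=\{\,a'\in A : a'\leq \vee B\,\}$ lies in $\Nb(A)$ whenever $B$ does (which is exactly the paper's remark at the start of Section~\ref{sec2} that $\card\left(A\cap(\vee B+\R^n_{\leq 0})\right)\leq d+1$), and a decomposition of the neighbors of $0$ by sign pattern followed by an appeal to Dickson's lemma---but it organizes them differently. The paper argues directly: Lemma~\ref{dlem} shows, by iteratively peeling off the finite set of minimal elements, that in any $S\subseteq\N^n$ only finitely many $s$ satisfy $\card\downarrow\negthinspace(s,S)\leq d+1$; since $A\cap\cuboid(0,b)=\downarrow\negthinspace(b,A\cap P)$ for a neighbor $b$ of $0$ in the orthant $P$, the closure property forces every such $b$ into the finite set $A^{P}_{[d]}$. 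You instead argue by contradiction, using the \emph{chain} form of Dickson's lemma (every infinite sequence in $\N^m$ contains an infinite nondecreasing subsequence) to place $d+2$ points of $A$ below a single $\vee\{0,a_{d+1}\}$. These two corollaries are dual faces of the well-quasi-order property of $\N^m$; yours is not literally the statement of Dickson's lemma and deserves a line of justification (e.g.\ refine the sequence coordinate by coordinate using the well-ordering of $\N$), whereas the paper's layering lemma follows by direct iteration. The trade-off: the paper's route produces an explicit finite set containing all neighbors of $0$ in a given orthant, while yours is purely qualitative; in exchange, your pigeonhole on strict sign patterns handles the degenerate case $S=\emptyset$ and the nonpositive coordinates very cleanly, and your verification that $a_i\leq \vee\{0,a_j\}$ for $i\leq j$ is complete and correct.
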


The following examples show that neither hypothesis may be omitted.

\begin{example}  Let $A=\{\,(n, 0)\in \R^2\mid n\in \N\,\}$.   Every simplex of $\Nb(A)$ is finite.  But every point of $A$ is an $A$-neighbor of every other point, so $\Nb(A)$ is not locally finite.   In this example, $A\subseteq \Z^n$ but $\dim\Nb(A) =\infty$. \end{example}

\begin{example} Let $A=\{a_0,a_1, \ldots\}$, where $a_0=(0, 0, 1)$ and $a_i=(i, \frac{1}{i}, \frac{i-1}{i})$.  The maximal simplices of $\Nb(A)$ are: $\{a_0, a_i, a_{i+1}\}\in \Nb(A)$, for $i=1,2,\ldots$. Since $a_0$ has infinitely many neighbors, $\Nb(A)$ is not locally finite.  In this example, $\dim\Nb(A)=2$ but $A\not\subseteq \Z^n$.
\end{example}

In \cite{MM}, the authors show that  if $A\subseteq \Z^n$ and $A$ is generic, then $\Nb(A)$ is locally finite.  The present theorem includes this result, for $A\subseteq \R^n$ is generic if and only if: for all $B\in \Nb(A)$, $\vee B+\R_{\leq 0}^n$ has \textit{at most} one point of $A$ on each face.  Thus, if $A\subseteq \R^n$ is generic, then $\dim \Nb(A)<n$.

\section{Proof}

If $P$ is a poset with order $\leq_P$, $s\in P$ and $S \subseteq P$, then we let $$\downarrow\negthinspace(s,S):=\{ \,u\in S\mid u\leq_P s\,\}.$$

\begin{lemma}\label{dlem} Let $S\subseteq \N^n$.    Then for any $k\in\N$, $\{\,s\in S\mid \card \downarrow\negthinspace(s,S)\leq k+1\,\}$ is finite.  \end{lemma}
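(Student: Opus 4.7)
The plan is induction on $k$, with Dickson's lemma---every subset of $\N^n$ has finitely many minimal elements in the coordinatewise order---as the engine. Abbreviate $T_k:=\{\,s\in S\mid \card \downarrow\negthinspace(s,S)\leq k+1\,\}$, so the claim becomes: every $T_k$ is finite. The key monotonicity observation is that if $u,s\in S$ and $u\leq s$, then $\downarrow\negthinspace(u,S)\subseteq \downarrow\negthinspace(s,S)$, and if moreover $u<s$, then $s$ lies in the right-hand set but not the left, so $\card \downarrow\negthinspace(u,S) < \card \downarrow\negthinspace(s,S)$.

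For the base case $k=0$, $T_0$ is precisely the set of minimal elements of $S$, which is finite by Dickson's lemma. For the inductive step, assuming $T_{k-1}$ is finite, it suffices to show $T_k\setminus T_{k-1}$ is finite. The crux is that every $s\in T_k\setminus T_{k-1}$ is a minimal element of $S\setminus T_{k-1}$: any $u\in S$ with $u<s$ would satisfy $\card \downarrow\negthinspace(u,S) < \card \downarrow\negthinspace(s,S)\leq k+1$ by the observation above, forcing $u\in T_{k-1}$. Hence $T_k\setminus T_{k-1}$ is contained in the finite set of minimal elements of $S\setminus T_{k-1}$, finite by a second appeal to Dickson's lemma.

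I do not foresee a real obstacle: the single insight driving the argument is noticing that every element of $T_k\setminus T_{k-1}$ is minimal in $S\setminus T_{k-1}$, which is exactly what sets up the second application of Dickson's lemma. Everything else is routine bookkeeping.
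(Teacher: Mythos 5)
Your proof is correct and follows essentially the same strategy as the paper's: iterated applications of Dickson's Lemma to the set with previously collected elements removed (the paper strips off successive layers $S_i$ of minimal elements and observes that the set in question lies in $S_0\cup\cdots\cup S_k$, while you strip off $T_{k-1}$ directly). If anything, your inductive step supplies the justification for the containment that the paper labels ``evidently,'' so no changes are needed.
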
 

\begin{proof}Let ${S_0}$ denote the set of minimal element of $S$. Dickson's Lemma states that $S_0$ is finite.  Assuming that $S_0, S_1, \ldots S_k$ have been defined, let
$$S_{(k)} = \bigcup\{\,S_i\mid 0\leq i\leq k\,\},$$
and let ${S_{k+1}}:=$ the set of minimal elements of $S\setminus S_{(k)}$.    
 Repeated applications of Dickson's Lemma show that $S_k$ is finite for each $k\in \N$, and hence $S_{(k)}$ is finite for each $k$. Evidently $\{\,s\in S\mid \card \downarrow\negthinspace(s,S)\leq k+1\,\}\subseteq S_{(k)}$. \end{proof}

For $a,b\in \R^n$, let $\cuboid(a,b)$ denote the set of $x\in \R^n$ that are coordinatewise between $a$ and $b$. i.e., 
 $$cuboid(a,b) = \{x\in \R^n\mid a\wedge b \leq x \leq a\vee b\,\}.$$
 Let $P$ be any orthant $\R^n$.  We view $P$ as a poset with order $\leq_{_P}$, where $a\leq_P b$ means $b-a\in P$.  With this order, $P$ is order-isomorphic to $\langle \R^n, \leq\rangle$.  Note that if $a\in P$, then $$cuboid(0,a) =  \{x\in \R^n\mid 0 \leq_{_P} x \leq_{_P} a\,\} =\downarrow\negthinspace(a, P).$$ 
 For any $A\subseteq \R^n$, any orthant $P$ and any $k\in \N$,  let 
$$A^P_{[k]}:=\{\,a\in A\cap P\mid\card \downarrow\negthinspace(a, A\cap P)\leq k+1\,\}.$$   
 
We now complete the proof of the Theorem.  Assume $A\subseteq \Z^n$ and $\dim \Nb(A)=d\in \N$.  %By Lemma \ref{dlem},  ${A^P_{[k]}}$ is finite for all $k\in \N$.   
We want to show that each $a\in A$ has finitely many $A$-neighbors.   By translation, we may assume that $a=0$.   It suffices to show that $0$ has at most finitely many $A$-neighbors in each orthant.  So, pick any orthant $P$.  Our strategy is to show that every $A$-neighbor of $0$ in $P$ belongs to $A^P_{[d]}$, which we know to be finite by Lemma \ref{dlem}.  Suppose $b\in P\cap A$ is an $A$-neighbor of $0$. Then $\cuboid(0,b)\subseteq (b\vee 0)+\R^n_{\leq 0}$.   By the dimension assumption, as remarked at the beginning of Section \ref{sec2}, the latter contains at most $d+1$ elements of $A$.  Thus, $b\in A^P_{[d]}$.  The theorem is proved.

\end{document}